\theoremstyle{plain}
\newtheorem*{theorem}{Theorem}
\theoremstyle{definition}
\newcommand{\tr}{\mathop{\mathrm{tr}}\nolimits}
\newcommand{\diag}{\mathop{\mathrm{diag}}\nolimits}
\newcommand{\Mat}{\mathop{\mathrm{Mat}}\nolimits}
\begin{document}

\title[Anti-Frobenius Algebras and AYBE]{Anti-Frobenius Algebras and Associative Yang-Baxter Equation}
\thanks{The work is partially supported by RFBR (project 11-01-00341-a) and by the Ministry of Education and Science of Russian Federation (project 8214).}
 
\author{A.~I.~Zobnin}

\address{Department of Mechanics and Mathematics, Moscow State University, 
119991, Moscow, Russia}

\email{Alexey.Zobnin@gmail.com}

\keywords{Associative Yang-Baxter equation, anti-Frobenius algebras, non-abelian quadratic Poisson brackets}

\begin{abstract}
Associative Yang-Baxter equation arises in different areas of algebra, e.g., when studying double quadratic Poisson brackets, non-abelian quadratic Poisson brackets, or associative algebras with cyclic 2-cocycle (anti-Frobenius algebras). Precisely, faithful representations of anti-Frobenius algebras (up to isomorphism) are in one-to-one correspondence with skew-symmetric solutions of associative Yang-Baxter equation (up to equivalence). 
Following the work of Odesskii, Rubtsov and Sokolov and using computer algebra system Sage, we found some constant skew-symmetric solutions of associative Yang-Baxter equation and construct corresponded non-abelian quadratic Poisson brackets.
\end{abstract}

\maketitle

MSC 16T25, 14H70

\section{Introduction}

Let $V$ be a finite dimensional vector space and $r$ be a linear operator on $V \otimes V$.
We consider skew-symmetric solutions of associative Yang-Baxter equation~\cite{Aguiar, Polischuk}
\begin{equation}
\label{AYBE}
r^{13}r^{12} - r^{12}r^{23} + r^{23}r^{13} = 0, \quad r^{12} = -r^{21},
\end{equation}
where $r^{ij}$ denotes an operator $r$ acting on $i$th and $j$th component of $V \otimes V \otimes V$.
Fix a basis $e_{\alpha}$ in $V$ and let 
$$
 r(e_{\alpha} \otimes e_{\beta}) = r^{\gamma \varepsilon}_{\alpha \beta} e_{\gamma} e_{\varepsilon}.
$$
Then~\eqref{AYBE} can be rewritten:
\begin{gather}
\label{AYBE1}
 r^{\gamma \varepsilon}_{\alpha \beta} = -r^{\varepsilon \gamma}_{\beta \alpha},\\
\label{AYBE2}
 r^{\lambda \sigma}_{\alpha \beta} r^{\mu \nu}_{\sigma \tau} + r^{\mu \sigma}_{\beta \tau} r^{\nu \lambda}_{\sigma \alpha} + r^{\nu \sigma}_{\tau \alpha} r^{\lambda \mu}_{\sigma \beta} = 0.
\end{gather}

Such solutions appear in different areas of algebra, e.g., in describing double quadratic Poisson brackets~\cite{Bergh, ORS2012}, non-abelian quadratic Poisson brackets~\cite{ORS} and anti-Frobenius algebras~\cite{Aguiar, ORS}.
Using the latter correspondence we construct some solutions of~\eqref{AYBE}.

In~\cite{ORS} Odesskii, Rubtsov and Sokolov considered a special class of non-abelian linear and quadratic Poisson brackets related to ODE systems of the form
$$
\frac{dx_{\alpha}}{dt} = F_{\alpha}(x_1, \ldots, x_N),
$$
where $x_i$ are $m \times m$-matrices in independent variables and $F_{\alpha}$ are non-commutative polynomials.
They generalized $m$-dimensional Manakov top (which is itself a generalization of $m$-dimensional Euler top)
to the case of arbitrary $N$.
They used bi-Hamiltonian approach, i.e., they constructed a pair of compatible (in some sense) Poisson brackets.
Poisson brackets in question
\begin{itemize}
\item are $GL_m$-adjoint invariant;
\item send traces of any two matrix polynomials to the trace of some other matrix polynomial.
\end{itemize}
Such brackets form an important class, since the corresponding Hamilton operator can be expressed in terms of left and right multiplication operators given by polynomials in matrices $x_1, \ldots, x_N$. They were called \emph{non-abelian Poisson brackets} in~\cite{ORS}.


Quadratic non-abelian Poisson brackets are of the form
$$
 \left\{x^{j_1}_{i_1, \alpha}, x^{j_2}_{i_2, \beta} \right\} = r^{\gamma \varepsilon}_{\alpha \beta} x^{j_2}_{i_1, \gamma} x^{j_1}_{i_2, \varepsilon}
 + a^{\gamma \varepsilon}_{\alpha \beta} x^{k}_{i_1, \gamma} x^{j_2}_{k, \varepsilon} \delta^{j_1}_{i_2} - a^{\gamma \varepsilon}_{\beta \alpha} x^k_{i_2, \gamma} x^{j_1}_{k, \varepsilon} \delta^{j_2}_{i_1}.
$$
We consider the case when $a^{\gamma \varepsilon}_{\alpha \beta}=0$.
Then the constraints on coefficients $r^{\gamma \varepsilon}_{\alpha \beta}$ are precisely~\eqref{AYBE1} and~\eqref{AYBE2}, i.e., tensor $r$ is a constant skew-symmetric solution of the associative Yang-Baxter equation
for an $N$-dimensional vector space~$V$.

The classification of non-abelian quadratic Poisson brackets (even with $a^{\gamma \varepsilon}_{\alpha \beta} = 0$) is an open question.
The complete solution for zero $a$ is known in the case $N=2$ (Aguiar~\cite{Aguiar} and Odesskii, Rubtsov and Sokolov~\cite{ORS2012})
and $N=3$~(Sokolov~\cite{Sokolov2012}). For example, all solutions (up to equivalence) in the case $N = 2$ are either of the form
$$
 r^{21}_{22} = -r^{12}_{22} = \lambda,
$$
or
$$
 r^{22}_{21} = -r^{22}_{12} = \lambda.
$$
(Here we presented only non-zero components of tensor $r$.)

\bigskip

\section{Anti-Frobenius algebras}
It would be tempting to find an appropriate algebraic structure for $r$. 
Odesskii, Rubtsov and Sokolov proved (\cite{ORS}, see also~\cite[Proposition 2.7]{Aguiar}) that the solutions of these equations (up to equivalence, i.e., change of basis) are in one-to-one correspondence with faithful representations of anti-Frobenius algebras (up to isomorphism). 
An associative algebra $\mathcal{A}$ is called \emph{anti-Frobenius} if it is equipped with a non-degenerate anti-symmetric bilinear form $(\cdot, \cdot)$ such that
\begin{equation}
\label{2-cocycle}
 (x, yz) + (y, zx) + (z, xy) = 0
\end{equation}
for all $x, y, z \in \mathcal{A}$. Such form is a cyclic 2-cocycle in the sense of Connes~\cite{Connes}.
This correspondence is constructive, i.e., it is possible to obtain explicitly the components of $r$ from an anti-Frobenius algebra and vice versa.
Precisely, let $\varphi: \mathcal{A} \to \Mat_{N}$ be a faithful representation, $\{e_{\alpha}\}$ be a basis of $\varphi(\mathcal{A})$ and $G$ be the matrix of the bilinear form $(x,y)$.
Then
\begin{equation}
\label{iso}
 r^{ab}_{cd} = g^{\alpha, \beta} e^a_{c, \alpha} e^b_{d, \beta},
\end{equation}
where $(g^{\alpha, \beta}) = G^{-1}$.

Odesskii, Rubtsov and Sokolov~\cite{ORS} considered the following example of an anti-Frobenius algebra:
$\mathcal{A}$ consists of $N \times N$-matrices with zero $N$th row,
and $(x,y) = l([x,y])$ for a generic element $l \in \mathcal{A}^*$. 
(Similar construction for Lie algebras was considered in~\cite{Elashvili}.)
They found the corresponding solution of an associative Yang-Baxter equation,
which is equivalent to the following one:
\begin{equation}
\label{rN1}
 r^{\alpha \beta}_{\alpha \beta} = r^{\beta \alpha}_{\alpha \beta} = r^{\alpha \alpha}_{\beta \alpha} = -r^{\alpha \alpha}_{\alpha \beta} = \frac{1}{\lambda_{\alpha} - \lambda_{\beta}}, \quad \alpha \ne \beta, \quad \alpha, \beta = 1, \ldots, N
\end{equation}
(here $\lambda_1, \ldots, \lambda_N$ are arbitrary pairwise distinct parameters, and the other components of $r$ are zero). We found that this solution can be directly obtained from the anti-Frobenius algebra
$$
 \mathcal{A}_{N,1} = \{ A \in \Mat_N \; \mid \sum_{i} a_{ij} = 0 \quad \forall \, j = 1,\ldots,N \}
$$
equipped with bilinear form
\begin{equation}
\label{form}
 (x, y) = \tr \left([x,y] \cdot \diag(\lambda_1, \ldots, \lambda_N) \right),
\end{equation}
which is isomorphic to $\mathcal{A}$.

\bigskip

\section{Main result}
Let's generalize the construction of $\mathcal{A}_{N,1}$.
Let $M$ be a proper divisor of $N$. We consider $N(N-M)$-dimensional algebra
$$
 \mathcal{A}_{N,M} = \{ A \in \Mat_N \; \mid \sum_{i \equiv r \!\!\pmod{M}} a_{ij} = 0 \quad \forall \, r = 1, \ldots, M, \; \forall \, j = 1,\ldots,N \}
$$
equipped with bilinear form~\eqref{form}.
It is easy to check that this skew-symmetric form satisfies~\eqref{2-cocycle}.

\begin{theorem}
\label{main}
Let $\lambda_i$ be equal to $\lambda_j$ iff $\left[\frac{i}{M}\right] = \left[\frac{j}{M}\right]$.
Then the form~\eqref{form} is non-degenerate, i.e., the algebra $\mathcal{A}_{N,M}$ is anti-Frobenius.
\end{theorem}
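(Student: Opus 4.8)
The plan is to show directly that the anti-symmetric form $(\cdot,\cdot)$ on $\mathcal{A}_{N,M}$ has trivial radical. Note first that $\mathcal{A}_{N,M}$ is cut out of $\Mat_N$ by the $MN$ linear equations $\sum_{i\equiv r\pmod M}a_{ij}=0$, and these equations involve pairwise disjoint sets of matrix entries, hence are independent; so $\dim\mathcal{A}_{N,M}=N(N-M)$ and non-degeneracy of $(\cdot,\cdot)$ amounts to the statement: if $x\in\mathcal{A}_{N,M}$ and $(x,y)=0$ for all $y\in\mathcal{A}_{N,M}$, then $x=0$.

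The first step is to rewrite the form. Writing $D=\diag(\lambda_1,\dots,\lambda_N)$, cyclicity of the trace gives $(x,y)=\tr([x,y]D)=\tr([D,x]\,y)=\sum_{i,j}(\lambda_i-\lambda_j)\,x_{ij}y_{ji}$, so the coefficient of $y_{pq}$ in the linear functional $y\mapsto(x,y)$ is $(\lambda_q-\lambda_p)x_{qp}$. Since the linear functionals that define $\mathcal{A}_{N,M}$ are $y\mapsto\sum_{p\equiv r}y_{pq}$, the functional $y\mapsto(x,y)$ annihilates $\mathcal{A}_{N,M}$ precisely when, for each fixed row index $i$, the quantity $(\lambda_i-\lambda_j)x_{ij}$ depends on the column index $j$ only through the residue $j\bmod M$; call this condition $(\ast)$. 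Hence the radical consists of those $x$ satisfying both $(\ast)$ and the membership relations $\sum_{i\equiv r}x_{ij}=0$.

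The second step exploits the hypothesis on the $\lambda_i$. Partition $\{1,\dots,N\}$ into the blocks on which $[\,\cdot\,/M]$, equivalently $\lambda$, is constant; each such block consists of $M$ consecutive integers, hence meets every residue class $C_r=\{i:i\equiv r\pmod M\}$ in exactly one element, and $\lambda_i=\lambda_j$ iff $i$ and $j$ lie in a common block. Fix a row $i$ and a residue class $C_r$, and let $j^{\ast}$ be the element of $C_r$ in the block of $i$. Applying $(\ast)$ to the class $C_r$ and using $\lambda_i-\lambda_{j^{\ast}}=0$ forces the common value to be $0$, so $(\lambda_i-\lambda_j)x_{ij}=0$ for every $j\in C_r$; since $\lambda_i-\lambda_j\ne0$ for $j\ne j^{\ast}$, we get $x_{ij}=0$ there. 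Letting $r$ vary, $x_{ij}=0$ whenever $i$ and $j$ lie in different blocks, i.e. $x$ is supported on the diagonal blocks. Finally, for a fixed column $j$, lying in a block $B$, the membership relation $\sum_{i\in C_r}x_{ij}=0$ reduces to its single surviving term $x_{i_r,j}$, where $i_r$ is the element of $C_r\cap B$; hence $x_{i_r,j}=0$, and as $r$ runs over $1,\dots,M$ the index $i_r$ runs over all of $B$, so $x_{ij}=0$ for all $i$. Thus $x=0$ and the form is non-degenerate.

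No single step is a serious obstacle; the point that needs the most care is the combinatorial bookkeeping in the last two steps. One caveat: if $[\,\cdot\,/M]$ is read literally on $\{1,\dots,N\}$, the two extreme blocks $\{1,\dots,M-1\}$ and $\{N\}$ each fail to meet one residue class, so for rows in those blocks $(\ast)$ alone does not give block-support; there one alternates $(\ast)$ with the membership relations in a finite descent to reach the same conclusion. Everything else is routine linear algebra once the key observation $(\ast)$ has turned the radical into an explicit linear system.
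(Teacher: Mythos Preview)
Your argument is correct and takes a genuinely different route from the paper. The paper constructs an explicit basis $e_{i,j}=E_{i,j}-E_{\bar i_j,j}$ of $\mathcal{A}_{N,M}$, pairs its elements as $\{e_{i,j},e_{j,i}\}$, and checks by direct computation that the Gram matrix of $(\cdot,\cdot)$ is block-diagonal with $2\times 2$ skew blocks of value $\pm(\lambda_i-\lambda_j)\ne 0$, hence invertible. You instead dualise: the functional $y\mapsto(x,y)$ lies in the span of the defining relations of $\mathcal{A}_{N,M}$ exactly under your condition $(\ast)$, and then the block/residue combinatorics force $x=0$. Your approach is essentially basis-free and cleaner for the bare non-degeneracy statement; the paper's approach has the bonus that it puts the form in canonical shape, so $G^{-1}$ is read off immediately and formula~\eqref{iso} for $r$ can be written down at once, which is exactly what the paper does in the next paragraph. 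One small correction to your caveat: under the literal $1$-indexed reading of $[\,\cdot/M\,]$ the extreme block $\{N\}$ misses $M-1$ residue classes, not one; but the paper's own proof makes clear that the intended blocks all have size $M$ and meet every residue class once, so your main argument applies without the descent.
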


\begin{proof}
Let's consider the following basis in $\mathcal{A}_{N,M}$:
$$
 B = \left\{ e_{i,j} := E_{i,j} - E_{\bar{i}_j, j} \; \mid \; \bar{i}_j \ne i \right\}.
$$
Here $\bar{i}_j$ denotes the integer that has the same remainder modulo $M$ as $i$
and the same quotient modulo $M$ as $j$, i.e., if $i = q_i M + r_i$ and $j = q_j M + r_j$ then
$\bar{i}_j = M q_j + r_i$. There are $N(N-M)$ elements in $B$.
Note that 
$$
 e_{i,j} \in B \; \iff \; q_i \ne q_j \; \iff \; e_{j,i} \in B
$$
and $e_{i,i} \notin B$.
By assumption we have $\lambda_j = \lambda_{q_j}$ and hence $\lambda_j = \lambda_{\bar{i}_j}$ for all~$j$.
Let's divide all elements in $B$ into pairs $\{e_{i,j}, \, e_{j,i}\}$.
We have
\begin{gather*}
(e_{i,j}, \, e_{j,i}) =
\tr \left([E_{i,j} - E_{\bar{i}_j, j}, E_{j, i} - E_{\bar{j}_i, i}] \cdot \diag(\lambda_1, \ldots, \lambda_N )\right) = \\
= \lambda_i - \lambda_j
\end{gather*}
and, if $p \ne i$ and $q \ne j$,
\begin{gather*}
(e_{i,j}, e_{q,p}) =
\tr \left([E_{i,j} - E_{\bar{i}_j, j}, E_{q, p} - E_{\bar{q}_p, p}] \cdot \diag(\lambda_1, \ldots, \lambda_N )\right) = \\
= \delta_{i,p}\delta_{j,q}(\lambda_i - \lambda_j) 
- \delta_{i,p}\delta_{j,\bar{q}_p}(\lambda_p - \lambda_{\bar{q}_p}) - \\
- \delta_{\bar{i}_j, p}\delta_{j,q}(\lambda_{\bar{i}_j} - \lambda_j) 
+ \delta_{\bar{i}_j, p}\delta_{j, \bar{q}_p}(\lambda_{\bar{i}_j} - \lambda_j) 
= 0.
\end{gather*}
Thus, $(x,y)$ has a canonical block diagonal form in the basis $B$ with nonzero blocks and is non-degenerate.
\end{proof}

Using~\eqref{iso}, we obtain the following components for the corresponding tensor~$r$:
$$
 r^{ab}_{cd} = \begin{cases}
 \sum\limits_{i, j} \dfrac{1}{\lambda_j - \lambda_i} \left(E_{i,j} - E_{\bar{i}_j, j}\right)^a_c \left(E_{j,i} - E_{\bar{j}_i, i}\right)^b_d
 = \dfrac{(\delta^a_d - \delta^a_{\bar{d}_c}) (\delta^b_c - \delta^b_{\bar{c}_d})}{\lambda_c - \lambda_d}, \text{ if } c \ne d,\\
 0, \text{ otherwise}.
\end{cases}
$$
In particular, $r^{ab}_{cd} = 0$ when either $a \ne d \pmod{M}$ or $b \ne c \pmod{M}$.

Note that for $M = 1$ this formula becomes
$$
 r^{ab}_{cd} = \begin{cases}
 \dfrac{(\delta^a_d - \delta^a_c) (\delta^b_c - \delta^b_d)}{\lambda_c - \lambda_d}, \text{ if } c \ne d,\\
 0, \text{ otherwise},
 \end{cases}
$$
which coincides with~\eqref{rN1}.
As we have seen, the special case $M = 1$ is equivalent to the construction of Odesskii, Rubtsov and Sokolov.

Consider now the case of arbitrary pairwise distinct $\lambda_i$.
One can check that in this case the form $(x, y)$ is non-degenerate too.
Using computer algebra system Sage, we obtained for small $N$ and generalized for all $N$ the following formula for components of tensor $r$ in this case:
\begin{gather*}
r^{a b}_{c d} = 0, \text{ if } a \not \equiv d \text{ or } b \not \equiv c, \\
r^{a a}_{e a} = -r^{a a}_{a e} = \frac{1}{\lambda_a - \lambda_e}, \text{ when } a \ne e, \\
r^{a a}_{c d} = 0, \text{ if } c \ne a \text{ or } d \ne a,\\
r^{a b}_{b a} = 
\frac{1}{\lambda_a - \lambda_b}
\left(
\frac{
\prod\limits_{b' \equiv b, \, b' \ne b} (\lambda_a - \lambda_{b'}) \prod\limits_{a' \equiv a, \, a' \ne a} (\lambda_b - \lambda_{a'})}
{\prod\limits_{a' \equiv a, \, a' \ne a} (\lambda_a - \lambda_{a'}) \prod\limits_{b' \equiv b, \, b' \ne b} (\lambda_b - \lambda_{b'})}
- 1\right), \text{ if } a \ne b, \\
r^{a b}_{c d} = 
\frac{1}{\lambda_a - \lambda_b} \cdot \frac{
\prod\limits_{c' \equiv c, \, c' \ne c} (\lambda_a - \lambda_{c'}) \prod\limits_{d' \equiv d, \, d' \ne d} (\lambda_b - \lambda_{d'})}
{\prod\limits_{a' \equiv a, \, a' \ne a} (\lambda_a - \lambda_{a'}) \prod\limits_{b' \equiv b, \, b' \ne b} (\lambda_b - \lambda_{b'})} \text{ otherwise}.
\end{gather*}
Here $x \equiv y$ means that $x \equiv y \! \pmod{M}$.

With these formulae one can construct corresponding quadratic Poisson brackets.
For example, in the case $N = 2M$ and $m=1$ the corresponding scalar Poisson bracket has the form
$$
 \left\{x_{\alpha}, x_{\beta}\right\} = \frac{(x_{\alpha} - x_{\alpha'})(x_{\beta}-x_{\beta'})(\lambda_{\alpha'}-\lambda_{\beta'})}{(\lambda_{\alpha} - \lambda_{\beta'})(\lambda_{\beta} - \lambda_{\beta'})}
$$
where $\gamma'$ relates to $\gamma$ as $|\gamma' - \gamma| = M$.

\medskip

Finally, we note that for the similar algebra 
$$
 \mathcal{A}^T_{N,M} = \{ A^T \; \mid \; A \in \mathcal{A}_{N,M}\}
$$
the corresponding solution $\hat{r}$ can be computed as follows:
$$
 \hat{r}^{a b}_{c d} = r^{c d}_{a b}.
$$

\bigskip

\section*{Acknowledgements}
The author is grateful to V. Sokolov for the support and extremely useful discussions.
\bigskip

\end{document}